\numberwithin{equation}{section}
\newtheorem{theorem}{Theorem}[section]
\newtheorem{lemma}[theorem]{Lemma}
\newtheorem{proposition}[theorem]{Proposition}
\newtheorem{corollary}[theorem]{Corollary}
\newtheorem{definition}[theorem]{Definition}
\newtheorem{remark}[theorem]{Remark}
\newcommand{\eproof}{\noindent\mbox{\framebox [0.8ex]{}}  \medskip}
\newcommand{\N}{\mathbf{N}}
\newcommand{\C}{\mathbf{C}}
\newcommand{\GL}{\mathbf{GL}}
\newcommand{\Zm}{\mathbf{Z}_m}
\newcommand{\Zthree}{\mathbf{Z}_3}
\newcommand{\IM}{\mathrm{Im}}
\newcommand{\dd}{\mathrm{d}}
\newcommand{\Psigmaj}{\mathcal{P}_{\sigma^j}}
\newcommand{\vecPsigmaj}{\vec{\mathcal{P}}_{\sigma^j}}
\newcommand{\vecP}{\vec{\mathcal{P}}}
\newcommand{\J}{\{0, \ldots, m-1 \}}
\newcommand{\Jo}{\{1, \ldots, m-1 \}}
\begin{document}

\title{Relative equivariants under compact Lie groups}

\author{Patr\'{\i}cia~H.~Baptistelli\\
{\small Departamento de Matem\'atica  - Universidade Estadual de Maring\'a}\\
{\small Av. Colombo, 5790, 87020-900 Maring\'a  PR, Brazil \footnote{Email address: phbaptistelli@uem.br}}
\and Miriam Manoel\\
{\small Departamento de Matem\'atica, ICMC - Universidade de S\~ao Paulo}\\
{\small C.P. 668, 13560-970 S\~ao Carlos SP, Brazil \footnote{Email address: miriam@icmc.usp.br (corresponding author)  }}
}
\date{}
\maketitle

\begin{abstract}

In this work we obtain the general form of  polynomial mappings that commute with a linear action of a relative symmetry group.  The aim is to give results for relative equivariant polynomials  that correspond to the results for relative invariants obtained in a previous paper [P.H. Baptistelli, M. Manoel (2013) Invariants and relative invariants under compact Lie groups, {\it J. Pure Appl. Algebra} 217, 2213--2220].  We present an algorithm to compute generators for relative equivariant submodules
from the invariant theory applied to the subgroup formed only by the symmetries. The same method provides, as a particular case, generators for equivariants under the whole group from the knowledge of equivariant generators by a smaller subgroup, which is normal of finite index. \\
\end{abstract}

\begin{tabular}{ll}
 {\it Keywords}: &   relative symmetry, invariant theory,  equivariant, finite index,
Molien formulae.
\end{tabular}

\vspace*{0.5cm}

\hspace*{0.1cm} {\it 2000 MSC}:  13A50; 34C14
\vspace*{2mm}

\section{Introduction} \label{sec:INTRO}

Occurrence of symmetries provides a powerful and useful tool in the formulation and analysis of  a model.  The appropriate combination of the geometry of a configuration space  with its inherent symmetries can show, in a natural way, existence of many phenomena not expected when symmetries are not present. This is due to the fact that  they are the reason for common observations such as degeneracy of solutions,  high codimension bifurcations, unexpected stabilities, phase relations, synchrony, periodicity of solutions, among many others. The set of  symmetries appears in the model with different algebraic structures in different contexts.  In particular, the study of dynamical systems has been greatly developed throughout many years when the collection of symmetries has a group structure. Group representation theory is in this case  a systematic way that enables us to deal with this question in distinct aspects in applications. More specifically,  in the numerical analysis of such systems  through computational programming as, for example, in \cite{Gat96,Gat, sturm}; also, in algebraic methods that provide the general form of the vector fields through the description of invariant functions and commuting mappings under distinct actions of symmetry groups as in \cite{antoneli1, antoneli, BM4}. In addition,  also based on group representation are the analysis of steady-state bifurcation, Hopf bifurcation, periodic cycles, classification of singularities and so on, for which we cite for example \cite{BM1, BM3, Buono, Lamb99} and the books \cite{chossat, GS69} with hundreds of references therein. More recently, in a related but distinct direction, many authors have investigated the occurrence of symmetries in networks of dynamical systems. Networks are schematically represented by a graph, and symmetries enter in a more subtle way, even when the architecture of the graph presents no  symmetry. The algebraic structure  in this case is of a groupoid of symmetries. This has been first formalized in \cite{SGP} and has been successfully applied since then by many authors.

The results in the present work  are related to symmetries in differential operations on complex variables.  We obtain a procedure from invariant theory to give the general form of mappings that commute with a complex linear action of a group $\Gamma$, the group of relative symmetries. We assume   that $\Gamma$  is a compact Lie group whose subgroup of symmetries is a normal subgroup $K$ of finite index $m$ greater than two.  More specifically, we consider a group epimorphism
\begin{equation} \label{eq:HOMOSIGMA}
 \sigma : \Gamma \to \Zm,
\end{equation}
where $\Zm$ denotes the cyclic group generated by the $m$-th root of unity and $K =  \ker \sigma$.
The case $m=2$ corresponds to reversible equivariant mappings and it has been studied in \cite{antoneli}. It models vector fields whose integral curves come into two types of symmetric families:  the time-preserving symmetric family, each element of which  given from an element of $K$ (the subgroup of symmetries) and the time-reversing symmetric family, whose elements come from each element of the complement $ \Gamma \setminus K$ (the subset of reversible symmetries).
Here we treat the case $m > 2$. Potential applications of this case are related to the theory of relativity in complex time, which not only complies with the possibility of time decomposition into two dimensions, but also conciliates with the idea of a complex space (see \cite{Naschie, MSSF}).  \\

From now on  we assume throughout that $\Gamma$ is a compact Lie group that admits an index-$m$ normal subgroup $K$, the kernel of (\ref{eq:HOMOSIGMA}), and  $\delta \in \Gamma$ is such that $\sigma(\delta) $ is the primitive $m$-th root of unity. We then have the decomposition of $\Gamma$ as a disjoint union of  left-cosets:
\begin{equation}
\label{eq:left-cosets}
\Gamma =  \ {\dot{\bigcup}}_{j=0}^{m-1}  \ \delta^{j} K.
\end{equation}
The results in \cite{BM4} regard relative invariants. In this paper, we derive the corresponding algebraic results for relative equivariants. \\

For a linear action of $\Gamma$ on a finite-dimensional vector space $V$ over ${\bf C}$,  consider the ring ${\cal P}(\Gamma)$ of the $\Gamma$-invariant polynomials, namely, the ring of polynomial functions $f : V \to \C$ such that
$f(\gamma x) = f(x),$ for all $\gamma \in \Gamma$ and $x \in V.$
For each $j \in \Jo,$ a polynomial function $f: V \to \C$ is called
{\it $\sigma^j$-relative invariant} if
\[ f(\gamma x)~=~\sigma^j(\gamma) f(x), \ \ \  \forall \gamma \in \Gamma, \  x \in V.\]
The set of the $\sigma^j$-relative invariant polynomial functions is a finitely generated graded module over
$\mathcal{P}(\Gamma)$ denoted by $\Psigmaj(\Gamma)$. In \cite[Theorem 2.4]{BM4} we have obtained the decomposition into modules
\begin{equation} \label{eq:DECOMPOSITION3}
 \mathcal{P}(K)~=~   \bigoplus_{j=0}^{m-1} \mathcal{P}_{\sigma^{j}}(\Gamma),
\end{equation}
which is the basis for the construction of a Hilbert basis for the ring $\mathcal{P}(\Gamma)$ ($j=0$) from the knowledge of a
Hilbert basis of the invariants only by the subgroup $K$ of  $\Gamma$. \\


In this paper we  find generators for  the modules $\mathcal{P}_{\sigma^j}(\Gamma)$, $j=1, \ldots, m-1$, from the $K$-invariants.  Also, we extend the algebraic formalism  of  \cite{antoneli} and apply results of \cite{BM4}  for the systematic study of relative equivariants.  In Section \ref{sec:RELATIVEEQUIV}, using  Reynolds operators on the module of the $K$-equivariants, we obtain our first main result,  Theorem~\ref{thm:RELATIVEDECOMP}, as an extension of the decomposition (\ref{eq:DECOMPOSITION3}) to the relative equivariant polynomial mappings. In Section~\ref{sec:MOLIEN} we register the Molien formulae associated to the Hilbert-Poincar\'e series of the dimensions of the spaces of homogeneous relative invariants and relative equivariants. The derivation of the formulae is a direct process from the results in \cite{Gat96} for equivariants; however, as a simple example shall reveal, a  subtlety in the general case makes this inclusion worthwhile.   This shall be used as a running example to illustrate and compare the results.
Section \ref{sec:GERADORES} presents the method to obtain generators for  $\mathcal{P}_{\sigma^j}(\Gamma)$, $j=1, \ldots, m-1$, Theorem~\ref{thm:RELATIVEGENERATORS}, which is also crucial for  our second main result, Theorem~\ref{thm:MAIN2}. This provides an algebraic way to obtain generators  for each of the $m$ modules of the relative equivariants over the ring $\mathcal{P}(\Gamma)$ from the knowledge of the generators for the $K$-invariants and $K$-equivariants.

\section{The relative equivariants} \label{sec:RELATIVEEQUIV}

For the next definition, we consider linear actions of $\Gamma$ on two
finite-dimensional vector spaces $V$ and $W$ over $\C.$ These actions correspond to
representations $\rho: \Gamma \to \GL(V)$ and $\eta: \Gamma \to \GL(W),$ where $\GL(V)$ is the group of
invertible linear transformations on $V$. We denote by $(\rho,V)$ and $(\eta, W)$
the vector spaces $V$ and $W$ endowed with each of the representations. 

\begin{definition} \label{def:equivariant}
A polynomial mapping $g: (\rho,V) \to (\eta,W) $ is called
$\Gamma$-equivariant if
\begin{equation} \label{eq:EQUIVARIANT}
g(\rho(\gamma) x)~=~\eta(\gamma) g(x)~, \quad \forall \gamma \in \Gamma, \ x \in V.
\end{equation}
\end{definition}

\quad

In the interest of generalizing results in previous papers  we consider here mappings with same vector space on the source and target, although it is a trivial matter to see that they hold equally well if these are distinct. So we consider henceforth two linear actions of  $\Gamma$  on $V,$ with representations $(\rho,V)$ and $(\eta, V).$ We shall also consider  the epimorphism $\sigma : \Gamma \to \Zm$ in (\ref{eq:HOMOSIGMA}) which defines the one-dimensional representation of $\Gamma$ on $\C$ given by $z \mapsto \sigma(\gamma)z,$ for $z \in \C$.
We now give our main definition:
\begin{definition} \label{def:relative equivariant}
For each $j \in \Jo,$ a polynomial mapping $g: (\rho,V) \to (\eta,V) $ is called
$\sigma^j$-relative equivariant if
\begin{equation} \label{eq:SIGMARELATIVEEQUIV}
g(\rho(\gamma) x)~=~\sigma^j(\gamma) \eta(\gamma) g(x)~, \quad \forall \gamma \in \Gamma, \ x \in V.
\end{equation}
\end{definition}

\quad

With this definition and  the decomposition (\ref{eq:left-cosets}) in mind, we shall call  elements in $K = \ker \sigma$ the {\it symmetries} of $\Gamma$ and elements of $\delta^j K$ the {\it $\sigma^j$-relative symmetries} of $\Gamma,$ for  $j \in \{1, \ldots, m-1\}.$ Note that if $g$ satisfies (\ref{eq:SIGMARELATIVEEQUIV}) for $j=0,$ then $g$ is a $\Gamma$-equivariant mapping on $V.$ The particular case
when $m=2$ and $\rho =  \eta$ corresponds to the reversible equivariant context of \cite{antoneli}.     \\

The set of the $\sigma^j$-relative equivariant polynomial mappings is a module over
$\mathcal{P}(\Gamma)$ denoted by $\vecPsigmaj(\Gamma)$. The module $\vecP_{\sigma^0}(\Gamma)$ of the $\Gamma$-equivariant polynomial mappings on $V$ shall be denoted simply by
$\vecP(\Gamma)$. A  $\sigma^j$-relative equivariant mapping on $V$ may be regarded as
a $\Gamma$-equivariant mapping from $(\rho,V)$ to $(\sigma^j \eta,V).$ Therefore, the existence of a finite generating set
for $\vecPsigmaj(\Gamma)$ is guaranteed by Po\'enaru's Theorem (see \cite[Theorem XII 6.8]{GS69}).
For the particular case when $m=2$ and $\eta = \rho$, $\sigma \eta$ corresponds to the $\sigma$-dual representation  of $\rho$ that was studied in \cite{BM1}.

Once $\rho$ and $\eta$ in Definition~\ref{def:relative equivariant} denote the representations on the source and on the target of $g$, respectively, we simplify the notation and shall always write (\ref{eq:SIGMARELATIVEEQUIV}) from now on  as
\[ g(\gamma x) = \sigma^j(\gamma) \gamma g(x), \ \ \forall \gamma \in \Gamma, \ x \in V.\]

The next lemma is immediate from the definitions.

\begin{lemma} \label{lem:CARACTERIZATION}
For each $j \in \J,$ we have
\[ \vecPsigmaj(\Gamma)~=~\bigl\{g \in \vecP(K):
 g(\delta x) = \sigma^j(\delta) \delta g(x) \,,\;\forall\: x \in V\bigr\}.\]
\end{lemma}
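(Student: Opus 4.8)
The plan is to prove the two set inclusions separately, the forward one being a direct specialization of the defining relation and the reverse one relying on the coset decomposition (\ref{eq:left-cosets}).

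For the inclusion $\subseteq$, I would take $g \in \vecPsigmaj(\Gamma)$ and restrict the defining relation (\ref{eq:SIGMARELATIVEEQUIV}) to two choices of $\gamma$. Taking $\gamma = k \in K = \ker\sigma$ gives $\sigma^j(k) = 1$, so that $g(kx) = k g(x)$ and hence $g \in \vecP(K)$; taking $\gamma = \delta$ yields precisely the extra condition $g(\delta x) = \sigma^j(\delta)\delta g(x)$. Nothing beyond reading off the definition is required at this stage.

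For the reverse inclusion, suppose $g \in \vecP(K)$ satisfies $g(\delta x) = \sigma^j(\delta)\delta g(x)$ for all $x$, and let $\gamma \in \Gamma$ be arbitrary. By (\ref{eq:left-cosets}) write $\gamma = \delta^\ell k$ with $\ell \in \J$ and $k \in K$. The key intermediate step is to upgrade the single-power relation to $g(\delta^\ell x) = \sigma^j(\delta^\ell)\delta^\ell g(x)$ for every $\ell$, which I would establish by induction on $\ell$: the inductive step applies the hypothesis at the point $\delta^{\ell} x$ and then uses that $\sigma^j$ and the target representation $\eta$ are both homomorphisms, so that $\sigma^j(\delta)\sigma^j(\delta^\ell) = \sigma^j(\delta^{\ell+1})$ and $\eta(\delta)\eta(\delta^\ell) = \eta(\delta^{\ell+1})$. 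Combining this with $K$-equivariance I compute $g(\gamma x) = g(\delta^\ell(kx)) = \sigma^j(\delta^\ell)\delta^\ell g(kx) = \sigma^j(\delta^\ell)\delta^\ell k g(x)$, and finally absorb the factors using $\sigma(k) = 1$ and the homomorphism property to obtain $\sigma^j(\delta^\ell)\eta(\delta^\ell k) = \sigma^j(\gamma)\gamma$, as required.

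As the text anticipates, the argument presents no genuine obstacle and is immediate from the definitions. The only points demanding care are purely notational: keeping track of the fact that in $g(\gamma x) = \sigma^j(\gamma)\gamma g(x)$ the left occurrence of $\gamma$ acts through $\rho$ on the source while the right one acts through $\eta$ on the target, and invoking the homomorphism properties of $\sigma$, $\rho$ and $\eta$ at each step so that the powers and products of $\delta$ and $k$ recombine correctly.
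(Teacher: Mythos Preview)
Your proposal is correct and spells out exactly the argument the paper leaves implicit when it says the lemma is ``immediate from the definitions'': the forward inclusion is just specialization to $\gamma\in K$ and $\gamma=\delta$, and the reverse inclusion uses the coset decomposition~(\ref{eq:left-cosets}) together with an induction on the power of $\delta$. There is nothing to add or correct.
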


Now, we introduce the \emph{$\sigma^j$-relative Reynolds operators} on
$\vecP(K)$ as the projections
$\vec{R}_j\!:\vecP(K) \to
\vecP(K)$,
\[
 \vec{R}_j(g)(x)~=~\frac{1}{m}
 \sum_{\;\gamma K} \overline{\sigma^j(\gamma)} \gamma g(\gamma x),\] where bar denotes  complex conjugation. We can rewrite each $\vec{R}_j$ as
\begin{equation}
 \begin{split} \label{eq:REYNOLDS2}
 \vec{R}_j(g)(x)~
 & = \frac{1}{m}
 \sum_{k=0}^{m-1} \overline{\sigma^{jk}(\delta)} \delta^k g(\delta^k x).
 \end{split}
\end{equation}

For the next proposition we recall that, since $\delta^m \in K, $
\begin{equation} \label{eq:ROOTUNITY}
\sum_{i = 0}^{m-1}\sigma(\delta)^{ik}= 1 + \sigma(\delta)^k + \sigma(\delta)^{2k} + \ldots + \sigma(\delta)^{(m-1)k} = 0,
\end{equation}
for all $k \in \{1, \ldots, m-1\}.$ If $I_{\vecP(K)}$ denotes the identity map on $\vecP(K)$,   we have:

\begin{proposition} \label{prop:PROPERTIES1}
For $j \in \J,$ the Reynolds operators $\vec{R}_j$'s satisfy the
following pro\-perties:
\begin{enumerate}[(i)]
 \item They are homomorphisms of $\mathcal{P}(\Gamma)$-modules and
 $       \vec{R}_0 + \vec{R}_1 + \ldots + \vec{R}_{m-1}~=~I_{\vecP(K)}. $
 \item They are idempotent projections, with $\IM(\vec{R}_j)~=~\vecPsigmaj(\Gamma).$
 \item For all $1 \leq j \leq m-1,$ we have $\vecP_{\sigma^j}(\Gamma) \cap \bigl(\vecP(\Gamma) + \ldots + \vecP_{\sigma^{j-1}}(\Gamma)\bigr) = \{0\}.$
 \end{enumerate}
\end{proposition}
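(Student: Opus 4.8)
The plan is to prove (i) directly, and then to extract (ii) and (iii) from it together with a single genuinely new computation, namely the verification that $\vec{R}_j(g)$ inherits the relative-equivariance relation at $\delta$. Throughout I use the form (\ref{eq:REYNOLDS2}) of the operator and the criterion of Lemma~\ref{lem:CARACTERIZATION}, so that for an element of $\vecP(K)$ membership in $\vecPsigmaj(\Gamma)$ reduces to the single relation $g(\delta x) = \sigma^j(\delta)\,\delta g(x)$.

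For (i), additivity of $\vec{R}_j$ is clear, and $\mathcal{P}(\Gamma)$-linearity follows because any $f\in\mathcal{P}(\Gamma)$ satisfies $f(\delta^k x) = f(x)$ and hence factors out of every summand of (\ref{eq:REYNOLDS2}), giving $\vec{R}_j(fg) = f\,\vec{R}_j(g)$. For the completeness relation I would interchange the sums over $j$ and $k$ in $\sum_{j=0}^{m-1}\vec{R}_j(g)(x)$ and, for each fixed $k$, collect the scalar $\sum_{j=0}^{m-1}\overline{\sigma^{jk}(\delta)}$. Since $\overline{\sigma(\delta)} = \sigma(\delta)^{-1}$ is again a primitive $m$-th root of unity, (\ref{eq:ROOTUNITY}) shows this inner sum is $0$ unless $k\equiv 0 \pmod m$, where it equals $m$; only the term $k=0$ survives, and as $\delta^0 = \id$ it returns $g(x)$. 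Hence $\vec{R}_0 + \cdots + \vec{R}_{m-1} = I_{\vecP(K)}$.

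For (ii) the essential inclusion is $\IM(\vec{R}_j)\subseteq\vecPsigmaj(\Gamma)$. As $\vec{R}_j(g)\in\vecP(K)$ by construction, by Lemma~\ref{lem:CARACTERIZATION} it suffices to check $\vec{R}_j(g)(\delta x) = \sigma^j(\delta)\,\delta\,\vec{R}_j(g)(x)$. I would substitute $\delta x$ into (\ref{eq:REYNOLDS2}), turning the $k$-th summand into one involving $g(\delta^{k+1}x)$, and reindex by $l = k+1$; the shift of the scalar produces the overall factor $\sigma^j(\delta)$ together with a residual factor of $\delta$ matching the target action, as required. The subtle point is that the reindexed sum runs over $l = 1, \ldots, m$ instead of $0, \ldots, m-1$, so I must reconcile the boundary term $l = m$ with the missing $l = 0$: here $\delta^m\in K$ together with the $K$-equivariance of $g$ gives $g(\delta^m x) = \delta^m g(x)$, while $\sigma^{jm}(\delta) = 1$, and these exactly identify the $l=m$ summand with the $l=0$ one and close the sum. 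This boundary reconciliation is the only step that genuinely invokes the normality of $K = \ker\sigma$ and $\sigma(\delta)^m = 1$, and it is where I expect the main difficulty to lie. For the reverse inclusion it is enough to show $\vec{R}_j$ restricts to the identity on $\vecPsigmaj(\Gamma)$: for $g\in\vecPsigmaj(\Gamma)$ the iterated relation $g(\delta^k x) = \sigma^{jk}(\delta)\,\delta^k g(x)$ cancels the powers of $\delta$ carried by the operator, leaving $\overline{\sigma^{jk}(\delta)}\,\sigma^{jk}(\delta) = 1$ in each of the $m$ summands, so $\vec{R}_j(g) = g$. Idempotency then follows at once, since $\vec{R}_j(g)\in\vecPsigmaj(\Gamma)$ and $\vec{R}_j$ fixes that module; thus $\vec{R}_j^2 = \vec{R}_j$ and $\IM(\vec{R}_j) = \vecPsigmaj(\Gamma)$.

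For (iii) I would first record the orthogonality $\vec{R}_j\vec{R}_i = 0$ for $i\neq j$: repeating the computation above on an element $h\in\vec{\mathcal{P}}_{\sigma^i}(\Gamma)$ leaves $\frac{1}{m}\sum_{k=0}^{m-1}\sigma(\delta)^{(i-j)k}\,h(x)$, which vanishes by (\ref{eq:ROOTUNITY}) because $i\not\equiv j \pmod m$. Now if $h$ lies in the intersection $\vecPsigmaj(\Gamma)\cap\bigl(\vecP(\Gamma)+\cdots+\vec{\mathcal{P}}_{\sigma^{j-1}}(\Gamma)\bigr)$, then on one hand $\vec{R}_j(h) = h$ by (ii), and on the other hand, writing $h = h_0 + \cdots + h_{j-1}$ with $h_i\in\vec{\mathcal{P}}_{\sigma^i}(\Gamma) = \IM(\vec{R}_i)$ and applying orthogonality, $\vec{R}_j(h) = \sum_{i=0}^{j-1}\vec{R}_j(h_i) = 0$. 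Hence $h = 0$, as claimed.
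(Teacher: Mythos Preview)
Your proof is correct and proceeds along exactly the lines the paper intends: part (i) is the character--orthogonality identity (\ref{eq:ROOTUNITY}) applied to the double sum, and parts (ii) and (iii) are the standard verification that the $\vec{R}_j$ form a complete family of mutually annihilating idempotents with images $\vecPsigmaj(\Gamma)$, which is precisely what the paper means when it says these are ``direct extensions of \cite[Proposition~2.3]{BM4}''. The only difference is that you have written out in full the reindexing argument and the orthogonality $\vec{R}_j\vec{R}_i=0$ that the paper leaves to the cited reference, so your account is more self-contained but not methodologically different.
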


\begin{proof}
{ (i)} follows  by the linearity of the action of $\Gamma$ and from
(\ref{eq:REYNOLDS2}) and (\ref{eq:ROOTUNITY}). (ii) and {(iii)} are direct extensions of \cite[Proposition 2.3]{BM4} for relative equivariant mappings.  \hfill \eproof
\end{proof}

\begin{remark} \normalfont
For $m=2,$ $\vec{R}_0$ and $\vec{R}_1$ are  $\vec{R}^\Gamma_{\Gamma_+}$ and $\vec{S}^\Gamma_{\Gamma_+}$ defined in \cite{antoneli}, respectively.
\end{remark}

The next theorem, our first main result,  follows now from Lemma~\ref{lem:CARACTERIZATION} and Proposition~\ref{prop:PROPERTIES1}:

\begin{theorem} \label{thm:RELATIVEDECOMP}
The following direct sum decomposition of $\mathcal{P}(\Gamma)$-modules holds:
\begin{equation}
\label{eq:DECOMPOSITION4} \vecP(K)~=~\bigoplus_{j=0}^{m-1} \vecP_{\sigma^{j}}(\Gamma).
\end{equation}
\end{theorem}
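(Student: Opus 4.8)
The plan is to derive the decomposition (\ref{eq:DECOMPOSITION4}) directly from the two ingredients already established: Lemma~\ref{lem:CARACTERIZATION}, which realizes each $\vecPsigmaj(\Gamma)$ as the image $\IM(\vec{R}_j)$ inside $\vecP(K)$, and Proposition~\ref{prop:PROPERTIES1}, whose three parts together assert that the family $\{\vec{R}_j\}_{j=0}^{m-1}$ is a complete orthogonal system of idempotent $\mathcal{P}(\Gamma)$-module homomorphisms summing to the identity. The strategy is the standard one: a finite family of idempotents that are pairwise "orthogonal" and sum to the identity induces an internal direct sum decomposition of the ambient module into their images.

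First I would establish that the sum $\sum_{j=0}^{m-1} \vecPsigmaj(\Gamma)$ equals all of $\vecP(K)$. Given any $g \in \vecP(K)$, part (i) of Proposition~\ref{prop:PROPERTIES1} gives
\[
g~=~I_{\vecP(K)}(g)~=~\sum_{j=0}^{m-1}\vec{R}_j(g),
\]
and by part (ii) each summand $\vec{R}_j(g)$ lies in $\IM(\vec{R}_j)=\vecPsigmaj(\Gamma)$. Since each $\vec{R}_j$ is a $\mathcal{P}(\Gamma)$-module homomorphism, this is a decomposition of $g$ as a sum of module elements, so $\vecP(K)=\sum_{j=0}^{m-1}\vecPsigmaj(\Gamma)$ as $\mathcal{P}(\Gamma)$-modules.

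Next I would verify that the sum is direct. Suppose $g_0+g_1+\cdots+g_{m-1}=0$ with each $g_j\in\vecPsigmaj(\Gamma)$. Fix any index $\ell$; then $g_\ell=-\sum_{j\neq\ell}g_j$ exhibits $g_\ell$ as an element of $\vecP_{\sigma^\ell}(\Gamma)\cap\bigl(\sum_{j\neq\ell}\vecPsigmaj(\Gamma)\bigr)$. To conclude $g_\ell=0$ I would invoke part (iii), which states exactly that $\vecP_{\sigma^j}(\Gamma)\cap\bigl(\vecP(\Gamma)+\cdots+\vecP_{\sigma^{j-1}}(\Gamma)\bigr)=\{0\}$; applying it for descending $\ell=m-1,\ldots,1$ kills the summands of highest index one at a time, so that each $g_\ell=0$. (Equivalently, one applies the idempotent $\vec{R}_\ell$ to the relation $\sum_j g_j=0$ and uses $\vec{R}_\ell g_j=\delta_{\ell j}g_j$, which follows from idempotency of $\vec{R}_\ell$ on its own image together with the vanishing of cross terms encoded in (iii).) Uniqueness of the representation of every element then follows, establishing the internal direct sum (\ref{eq:DECOMPOSITION4}).

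**The main obstacle** is largely bookkeeping rather than conceptual: part (iii) as stated only gives the pairwise-type intersection condition in the specific triangular form $\vecP_{\sigma^j}\cap(\vecP+\cdots+\vecP_{\sigma^{j-1}})=\{0\}$, so I must be careful to peel off summands in the correct (descending) order rather than assuming a symmetric orthogonality. Since Proposition~\ref{prop:PROPERTIES1} supplies precisely the completeness relation (i), the image identification (ii), and the triangular intersection condition (iii) — which are exactly the three hypotheses needed — the theorem follows with no further analytic input, which is why the excerpt announces it as \emph{immediate}.
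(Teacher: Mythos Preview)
Your proposal is correct and follows exactly the route the paper intends: the theorem is announced as an immediate consequence of Lemma~\ref{lem:CARACTERIZATION} and Proposition~\ref{prop:PROPERTIES1}, and you have simply unpacked that immediacy by using (i) to span, (ii) to identify the images, and (iii) to establish directness. One small attribution quibble: it is Proposition~\ref{prop:PROPERTIES1}(ii), not Lemma~\ref{lem:CARACTERIZATION}, that identifies $\vecPsigmaj(\Gamma)$ with $\IM(\vec{R}_j)$; the lemma is what places each $\vecPsigmaj(\Gamma)$ inside $\vecP(K)$ in the first place.
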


\section{Molien formulae} \label{sec:MOLIEN}

In this section we establish Molien formulae for  the
Hilbert-Poincar\'e series of the modules $\Psigmaj(\Gamma)$
and $\vecPsigmaj(\Gamma),$  $j \in \{0, \ldots, m-1\}$. As mentioned in Section~\ref{sec:INTRO}, the derivation of these is straightforward from the result obtained in \cite[Theorem 2.12]{Gat96} for $\Gamma$-equivariant mappings as in (\ref{eq:EQUIVARIANT}). However, here  we discuss a surprising feature related to the dependency of the series with respect to the representation of $\Gamma$  on the target of the mappings.


\subsection{Hilbert-Poincar\'e series and character formulae}

As we shalll see below, the formulae involve the character of the representation in the target space of the mappings. We recall that the $\it{character}$ of the \ representation $\eta$ of $\Gamma$ on $V$  is the function $\chi: \Gamma \to \C$ given by
\[\chi(\gamma) = \mathrm{tr}(\eta(\gamma)),\]
the trace of the matrix $\eta(\gamma).$

 We consider now the natural gradings
\begin{equation} \label{eq:GRADINGS}
 \Psigmaj(\Gamma)~=~\bigoplus_{d=0}^{\infty}
 \Psigmaj^d(\Gamma)
 \qquad\text{and}\qquad
 \vecPsigmaj(\Gamma)~=~\bigoplus_{d=0}^{\infty}
 \vecPsigmaj^{d}(\Gamma),
\end{equation}
where $\Psigmaj^d(\Gamma)$ is the space of homogeneous
 $\sigma^j$-relative invariant polynomials of degree $d$ and $\vecPsigmaj^d(\Gamma)$ is the
space of $\sigma^j$-relative equivariant polynomial mappings  with homogeneous
components of degree $d$. From (\ref{eq:DECOMPOSITION3}) and (\ref{eq:DECOMPOSITION4}) we have the direct sum decompositions of vector spaces
\[
 \mathcal{P}^d(K)~=~ \bigoplus_{j=0}^{m-1} \mathcal{P}_{\sigma^{j}}^d(\Gamma)
\]
and
\[
 \vecP^d(K)~=~\bigoplus_{j=0}^{m-1} \vecP_{\sigma^{j}}^d(\Gamma)
\]
for every degree $d \in \N$.

The \emph{Hilbert-Poincar\'e series} for $\mathcal{P}_{\sigma^j}(\Gamma)$
and for $\vecPsigmaj(\Gamma)$ are defined as the  formal
power series
\[
 \tilde{\Phi}_j(t)~=~\sum_{d=0}^{\infty}
 \dim \mathcal{P}_{\sigma^j}^d(\Gamma)\, t^d
 \qquad\text{and}\qquad
 \tilde{\Psi}_j(t)~=~\sum_{d=0}^{\infty}
 \dim \vecPsigmaj^d(\Gamma)\, t^d~.
\]

From the general Molien Theorem presented in  \cite[Theorem
2.12]{Gat96},  a formula for each Hilbert-Poincar\'e series above in terms of the normalized
Haar integral over $\Gamma$ is obtained directly, given by
\begin{equation} \label{eq:MOLIEN}
 \tilde{\Phi}_j(t)~=~\int_{\Gamma}
 \frac{\sigma^j(\gamma^{-1})}{\det (I-t \rho(\gamma))} \dd{\gamma}
 \quad\text{and} \quad
 \tilde{\Psi}_j(t)~=~\int_{\Gamma}
 \frac{\sigma^j(\gamma^{-1})\chi(\gamma^{-1})}{\det (I-t \rho(\gamma))} \dd{\gamma}.
\end{equation}

\begin{remark} \normalfont  \label{rmk:MOLIEN}
We stress that the dependency of the second integral with respect to  the character refers to  the representation $\eta$. In many applications in  dynamical systems, the computation of all generators
for the module of symmetric vector fields does not depend on the choice of the coordinate systems if it is done appropriately. In fact, this has been widely used by many authors to simplify the computation of explicit generators (see, for example, the classical example of ${\bf D}_n$-equivariants on ${\bf C}$
given in \cite[Examples 4.1(c) and 5.4(c)]{GS69}). There are cases, however, for which this can be a complicated computational task, included here the cases when the $K$-invariants and $K$-equivariants are not known  or are still complicated to be computed. For those cases, generators are usually found degree by degree. This is  when the knowledge {\it a priori} of the Hilbert-Poincar\'e series is most useful. Now, whereas certain changes of coordinates have no effect on the generators, it may lead to a change in the Hilbert-Poincar\'e series, once this corresponds to a switch in the character of the  symmetry group representation.  We illustrate this fact with a simple example, given in Subsection~\ref{subsec:MOLIEN EXAMPLE}.
\end{remark}

We now use \cite[Theorem 2.8]{antoneli} to obtain the dimensions of the homogeneous components of
(\ref{eq:GRADINGS}) in terms of the character function and of the
 integral over $\Gamma$. Once  $\sigma^j$-relative invariants on $V$ can be regarded as $\Gamma$-equivariant mappings from $(\rho,V)$ to $(\sigma^j,\C)$,  if $S^d V$ denotes the space of symmetric $k$-tensors over V, analogously as for index-2 case  (see expression (2.13) in \cite{antoneli}), here we  get
\begin{equation} \label{eq:DIM1}
 \dim \mathcal{P}_{\sigma^j}^d(\Gamma)~=~\int_{\Gamma}
 \sigma^j(\gamma)\chi_{(d)}(\gamma) \dd{\gamma},
\end{equation}
\begin{equation} \label{eq:DIM2} \dim \vecPsigmaj^d(\Gamma)~=~\int_{\Gamma}
 \sigma^j(\gamma) \chi_{(d)}(\gamma)\chi(\gamma) \dd{\gamma}, \end{equation}
where $\chi_{(d)}$ is the  character afforded by the induced
 action of $\Gamma$ on $S^dV$.

There is a
well-known recursive formula to compute the character $\chi_{(d)}$ of $S^d V,$ whose proof can be found  in for example \cite[Section 4]{antoneli1}:
\[
 d\, \chi_{(d)}(\gamma)~=~\sum_{i=0}^{d-1}
 \chi(\gamma^{d-i})\chi_{(i)}(\gamma),
\]
where $\chi_{(0)}=1$. Now, the Fubini-type result given in \cite[Proposition I 5.16]{Brocker} gives the
Haar integral  as an iteration of integrals. Applying the result to the character formulae (\ref{eq:DIM1}) and (\ref{eq:DIM2}) we get the
following integral expressions for the dimensions
of the spaces of relative invariants and relative equivariants, respectively:

\[
 \dim \mathcal{P}_{\sigma^j}^d(\Gamma)~=~\frac{1}{m}
 \Bigl[\sum _{k = 0}^{m-1}
 \int_{K} \sigma^j(\delta^{k} \gamma)\chi_{(d)}(\delta^{k} \gamma)\dd{\gamma} \Bigr] \]
and
\[
 \dim \vecPsigmaj^d(\Gamma)~=~\frac{1}{m}
 \Bigl[\sum _{k = 0}^{m-1} \int_{K} \sigma^j(\delta^{k} \gamma)\chi_{(d)}(\delta^{k} \gamma)
 \chi(\delta^k \gamma)\dd{\gamma}\Bigr]~.
\]
In particular,
\[ \dim \mathcal{P}^d(\Gamma)~=~\frac{1}{m}
 \Bigl[\int_{K} \chi_{(d)}(\gamma) \dd{\gamma} +
 \int_{K}\chi_{(d)}(\delta \gamma)\dd{\gamma} + \ldots + \int_{K}\chi_{(d)}(\delta^{m-1} \gamma)\dd{\gamma}\Bigr]
\]
and \[
 \dim \vecP^d(\Gamma)~=~\frac{1}{m}
 \Bigl[\int_{K} \chi_{(d)}(\gamma)\chi(\gamma)\dd{\gamma} +
 \ldots + \int_{K}\chi_{(d)}(\delta^{m-1} \gamma)
 \chi(\delta^{m-1}\gamma)\dd{\gamma}\Bigr]~.
\]

From (\ref{eq:ROOTUNITY}), we obtain
\[
 \sum_{j=0}^{m-1} \dim \mathcal{P}_{\sigma^j}^d(\Gamma) = \int_{K}
 \chi_{(d)}(\gamma)\dd{\gamma}~=~\dim \mathcal{P}^d(K)
\] and
\[
 \sum_{j=0}^{m-1} \dim \vecP_{\sigma^j}^d(\Gamma) = \int_{K}
 \chi_{(d)}(\gamma)\chi(\gamma)\dd{\gamma}~=~\dim \vecP^d(K).
\]

\subsection{Example} \label{subsec:MOLIEN EXAMPLE}

We apply the Molien formulae (\ref{eq:MOLIEN}) for each
Hilbert-Poincar\'e series of relative invariant and equivariant for the action on ${\bf C}^2$ of the cartesian product  $\Gamma = \Zthree \times \Zthree$ of the order-3 cyclic group ${\bf Z}_3 =
\langle e^{2 \pi i/3} \rangle$.

In standard coordinates $z=(z_1,z_2) \in {\bf C}^2,$ we consider the diagonal action
\[ \xi z~=~(\xi_1 z_1,\xi_2 z_2), \]
$\xi = (\xi_1,\xi_2) \in \Gamma,$ and the epimorphism $\sigma: \Gamma \rightarrow \Zthree$ such that $K = \ker \sigma = \Zthree \times \{\mathrm{I} \}$. Let us take $\delta = (1,e^{\frac{2\pi i}{3}}) \in \Gamma \setminus K$. \\

On the target space of the mappings we consider the standard representation given by complex product on each component.  Hence,
\begin{equation} \label{eq:CHARACTER}
 \chi(\xi_1, \xi_2) = \xi_1 + \xi_2.
 \end{equation}
On  each component of the domain, we identify $ z_i \equiv (z_i, \bar{z_i}),$ $i=1,2,$
and so the representation of $\Gamma$ on the domain is generated by the order-4 matrices
\begin{equation} \label{eq:MATRICES} \left( \begin{array}{cc|cc}
e^{i 2 \pi/3}  & 0 &  & 0  \\
0&   e^{-i 2 \pi/3} & &   \\  \hline
 & 0& & {\rm I} \\
\end{array} \right), \ \ \
\left( \begin{array}{cc|cc}
{\rm I}  &  &  & 0  \\ \hline
&  &  e^{i 2 \pi/3} &    \\
0 & & & e^{-i 2 \pi/3} \\
\end{array} \right).
\end{equation}
For $j=0$, $\sigma^j(\gamma^{-1}) = 1, \forall \gamma \in \Gamma$. We now use (\ref{eq:MOLIEN}) to compute
\begin{align} \label{eq:SERIES1}
\tilde{\Phi}_0(t) \ = \ &  \int_{\Gamma}
 \frac{1}{\det (I-t \rho(\gamma))} \dd{\gamma} =
 \frac{1}{9} \sum_{\gamma \in \Gamma}  \frac{1}{\det (I-t \rho(\gamma))}  = \nonumber \\[1mm]
 = & \ \  1 + 2 t^2+ 4 t^3 + 3 t^4+ 8 t^5 + 12 t^6 + \cdots. \\[1mm] \nonumber
 \end{align}
For $j=1$, we have $\sigma(\xi_1, \xi_2) = \xi_2$, and  (\ref{eq:MOLIEN}) gives
\begin{align} \label{eq:SERIES2}
\tilde{\Phi}_1(t) \ = \ &  \int_{\Gamma}
 \frac{\sigma(\gamma^{-1})}{\det (I-t \rho(\gamma))} \dd{\gamma} =
 \frac{1}{9} \sum_{\gamma \in \Gamma}  \frac{\sigma(\gamma^{-1})}{\det (I-t \rho(\gamma))}  = \nonumber\\[1mm]
 = & \ \  t+ t^2 + 2 t^3 + 5 t^4 + 6 t^5 + 9 t^6 + \cdots. \\[1mm] \nonumber
 \end{align}
For $j=2$, we have $\sigma^2(\xi_1, \xi_2) = \xi_2^2$, and  (\ref{eq:MOLIEN}) gives
\begin{align} \label{eq:SERIES3}
\tilde{\Phi}_2(t) \ = \ &  \int_{\Gamma}
 \frac{\sigma^2(\gamma^{-1})}{\det (I-t \rho(\gamma))} \dd{\gamma} =
 \frac{1}{9} \sum_{\gamma \in \Gamma}  \frac{\sigma^2(\gamma^{-1})}{\det (I-t \rho(\gamma))}  =
  \nonumber\\[1mm]
 = & \ \ \tilde{\Phi}_1(t) \  =  \ t+ t^2 + 2 t^3 + 5 t^4 + 6 t^5 + 9 t^6 + \cdots. \\[1mm] \nonumber
 \end{align}
 Also, from  (\ref{eq:MOLIEN}) and (\ref{eq:CHARACTER}) we find
\begin{align} \label{eq:SERIES4}
\tilde{\Psi}_0(t) \ = \ &  \int_{\Gamma}
 \frac{\chi(\gamma^{-1})}{\det (I-t \rho(\gamma))} \dd{\gamma} =
 \frac{1}{9} \sum_{\gamma \in \Gamma}  \frac{\overline{\chi(\gamma)}}{\det (I-t \rho(\gamma))}  =
 \nonumber \\[1mm]
 = & \ \  2t+ 2t^2+4t^3+10t^4+12t^5+18 t^6 + \cdots, \\ \nonumber
 \end{align}
\begin{align} \label{eq:SERIES5}
\tilde{\Psi}_1(t) \ = \ &  \int_{\Gamma}
 \frac{\sigma(\gamma^{-1})  \chi(\gamma^{-1})}{\det (I-t \rho(\gamma))} \dd{\gamma} =
 \frac{1}{9} \sum_{\gamma \in \Gamma}  \frac{\sigma(\gamma^{-1}) \overline{\chi(\gamma)}}{\det (I-t \rho(\gamma))}  =
 \nonumber \\[1mm]
 = & \ \ t + 2 t^2+ 4 t^3+ 8 t^4+ 12 t^5+ 18 t^6 + \cdots, \\\nonumber
 \end{align}
and
\begin{align} \label{eq:SERIES6}
\tilde{\Psi}_2(t) \ = \ &  \int_{\Gamma}
 \frac{\sigma^2(\gamma^{-1})  \chi(\gamma^{-1})}{\det (I-t \rho(\gamma))} \dd{\gamma} =
 \frac{1}{9} \sum_{\gamma \in \Gamma}  \frac{\sigma^2(\gamma^{-1}) \overline{\chi(\gamma)}}{\det (I-t \rho(\gamma))}  =
 \nonumber \\[1mm]
 = & \ \ 1 + 3 t^2 + 6 t^3+ 6 t^4 + 14 t^5 + 21 t^6 \cdots. \\\nonumber
 \end{align}
If we considered in the target space the representation given by
(\ref{eq:MATRICES}), then it is easy to see that the change in the character would affect  the Hilbert-Poincar\'e series for the relative equivariants to be twice the expressions given in (\ref{eq:SERIES4}), (\ref{eq:SERIES5}) and (\ref{eq:SERIES6}).  Hence, this simple example illustrates how these integrals are not invariant by changes of coordinates that on the other hand leave invariant the set of generators. In Subsection~\ref{example:generators} we shall come back to this example, comparing the series obtained here with the dimensions of homogeneous vector spaces given by direct computation.

\section{Generators of relative equivariants}

\label{sec:GERADORES}

In this section, for $0 \leq j \leq m-1$,  we compute generators for $\vecPsigmaj(\Gamma)$ as modules over $\mathcal{P}(\Gamma),$  from a Hilbert basis of $\mathcal{P}(K)$ and a set of generators  of $\vecP(K),$ where $K$ is the index-$m$ normal subgroup formed only by the symmetries of $\Gamma$. The key idea is  to use the decomposition given by  Theorem \ref{thm:RELATIVEDECOMP} to transfer generating sets from one module to another. We finish the section by applying the result for one example.

\subsection{Generators of relative invariants}

We see from  \cite[Theorems 3.2 and 3.5]{BM4} how to compute generators for $\mathcal{P}(\Gamma),$ as a ring, from a Hilbert basis of $\mathcal{P}(K),$ for $m=2$ and $m > 2$ respectively. The next theorem extends this, giving generators of the  $\sigma^j$-relative invariants for any $j = 1, \ldots m-1,$ with $m \geq 2.$

\begin{theorem} \label{thm:RELATIVEGENERATORS}
Let $\{u_1,\ldots,u_s\}$ be a
Hilbert basis of $\mathcal{P}(K).$ Then, for each $j \in \Jo,$ the $\sigma^j$-relative invariants
\[R_1(u_1)^{\alpha_{11}} \ldots R_1(u_s)^{\alpha_{1s}} \ldots R_{m-1}(u_1)^{\alpha_{m-1, 1}} \ldots R_{m-1}(u_s)^{\alpha_{m-1,s}},\]
\noindent such that $\displaystyle \sum_{l=1}^{m-1} l (\alpha_{l1} + \ldots + \alpha_{ls}) \equiv j \ \text{mod} \ m,$ form a generating set of the module $\Psigmaj(\Gamma)$ over the ring $\mathcal{P}(\Gamma).$

\end{theorem}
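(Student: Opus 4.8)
The plan is to regard $\mathcal{P}(K)$ as a $\Zm$-graded ring via the decomposition (\ref{eq:DECOMPOSITION3}), $\mathcal{P}(K) = \bigoplus_{l=0}^{m-1}\mathcal{P}_{\sigma^l}(\Gamma)$, and to read off generators for the grade-$j$ piece directly from a Hilbert basis of $\mathcal{P}(K)$. The first step is to record that this grading is \emph{multiplicative}: if $f \in \mathcal{P}_{\sigma^a}(\Gamma)$ and $h \in \mathcal{P}_{\sigma^b}(\Gamma)$, then
\[ (fh)(\gamma x) = \sigma^a(\gamma)\,\sigma^b(\gamma)\,f(x)h(x) = \sigma^{a+b}(\gamma)\,(fh)(x), \]
so $fh \in \mathcal{P}_{\sigma^{a+b}}(\Gamma)$, the index read modulo $m$ because $\sigma$ is a homomorphism into $\Zm$. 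Consequently a product $\prod_{l=0}^{m-1}\prod_{i=1}^{s} R_l(u_i)^{\alpha_{li}}$ lies in $\mathcal{P}_{\sigma^t}(\Gamma)$ with $t \equiv \sum_{l=0}^{m-1} l(\alpha_{l1}+\cdots+\alpha_{ls}) \pmod m$; in particular the factors $R_0(u_i)$ belong to $\mathcal{P}(\Gamma)$, and each $R_l$ is the idempotent projection of $\mathcal{P}(K)$ onto $\mathcal{P}_{\sigma^l}(\Gamma)$ (the invariant analogue of Proposition~\ref{prop:PROPERTIES1}, cf. \cite[Proposition 2.3]{BM4}).

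Next I would take an arbitrary $f \in \Psigmaj(\Gamma)$. Since $\Psigmaj(\Gamma) \subseteq \mathcal{P}(K)$ and $\{u_1,\ldots,u_s\}$ is a Hilbert basis of $\mathcal{P}(K)$, I can write $f = P(u_1,\ldots,u_s)$ for some polynomial $P$ with complex coefficients. Substituting $u_i = \sum_{l=0}^{m-1} R_l(u_i)$ (from (\ref{eq:DECOMPOSITION3})) into $P$ and multiplying out expresses $f$ as a $\C$-linear combination of products $\prod_{l=0}^{m-1}\prod_{i=1}^{s} R_l(u_i)^{\alpha_{li}}$, each of which, by the first step, is homogeneous of grade $t \equiv \sum_l l(\alpha_{l1}+\cdots+\alpha_{ls}) \pmod m$.

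I would then group this expansion by grade $t$. By uniqueness of the decomposition (\ref{eq:DECOMPOSITION3}), the grade-$t$ component of $f$ equals $R_t(f)$; since $f \in \Psigmaj(\Gamma)$, we have $R_t(f) = 0$ for $t \neq j$ and $R_j(f) = f$. Hence only the products of grade $j$ survive, so $f$ is a $\C$-combination of products satisfying $\sum_{l=0}^{m-1} l(\alpha_{l1}+\cdots+\alpha_{ls}) \equiv j \pmod m$. Finally I would separate the $l=0$ factors: since $\prod_i R_0(u_i)^{\alpha_{0i}} \in \mathcal{P}(\Gamma)$ and the index $l=0$ contributes nothing to the congruence, each surviving product factors as an element of $\mathcal{P}(\Gamma)$ times a product $\prod_{l=1}^{m-1}\prod_i R_l(u_i)^{\alpha_{li}}$ obeying the same condition $\sum_{l=1}^{m-1} l(\alpha_{l1}+\cdots+\alpha_{ls}) \equiv j$. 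Collecting the $\mathcal{P}(\Gamma)$-coefficients then displays $f$ in terms of the asserted generators.

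The main obstacle is conceptual rather than computational: it is verifying that projecting $f$ onto its grade-$j$ component picks out exactly the monomials obeying the congruence $\sum_l l(\alpha_{l1}+\cdots+\alpha_{ls}) \equiv j$, which rests on the multiplicative grading together with the identification of $R_j$ as the projection onto $\Psigmaj(\Gamma)$. The reverse containment — that every listed product does lie in $\Psigmaj(\Gamma)$ — follows at once from the same grading computation, and the remaining manipulations (expressing $f$ over the Hilbert basis, expanding, and absorbing the $l=0$ factors into $\mathcal{P}(\Gamma)$) are routine bookkeeping.
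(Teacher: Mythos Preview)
Your proposal is correct and follows essentially the same route as the paper: both replace the Hilbert basis $\{u_i\}$ by $\{R_l(u_i)\}$, expand $f$ in products of these, and use the weight behaviour $R_l(u_i)(\delta^k x)=\sigma(\delta)^{lk}R_l(u_i)(x)$ to single out the monomials with $\sum_l l(\alpha_{l1}+\cdots+\alpha_{ls})\equiv j\pmod m$, then absorb the $R_0$-factors into $\mathcal{P}(\Gamma)$. The only difference is packaging: you invoke the $\Zm$-grading of $\mathcal{P}(K)$ and uniqueness of the decomposition (\ref{eq:DECOMPOSITION3}) to isolate the grade-$j$ part, whereas the paper reaches the same conclusion via the identity (\ref{eq:Rj}) together with the root-of-unity sum (\ref{eq:ROOTUNITY}).
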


\begin{proof}
Let $\{u_1,\dots,u_s\}$ be a
Hilbert basis of $\mathcal{P}(K)$. For each $j \in \J,$ consider the $\sigma^j$-relative Reynolds operators on $\mathcal{P}(K)$ given by
\[R_j(f)(x) = \frac{1}{m} \sum_{k=0}^{m-1} \overline{\sigma^{jk}(\delta)} f(\delta^k x).\] It follows from \cite[Proposition 2.3]{BM4} that if $f \in \Psigmaj(\Gamma),$ then $R_j(f) = f.$ Hence,
\begin{equation}
\label{eq:Rj}
f(x) = \frac{1}{m-1} \sum_{k=1}^{m-1}\overline{\sigma^{j}(\delta^{k})}f(\delta^k x), \quad \forall \ x \in V.
\end{equation}
Also from \cite[Proposition 2.3]{BM4}, for each $i \in \{1, \ldots, s\}$ we have
$u_i = \displaystyle \sum_{j=0}^{m-1}R_j(u_i).$ So \[\{R_0(u_i), R_1(u_i), \ldots, R_{m-1}(u_i): 1 \leq i \leq s\}\] is a new Hilbert basis of $\mathcal{P}(K).$ Using multi-index notation, every polynomial function $f \in \Psigmaj(\Gamma)$ can be written as
\[f~=~\sum a_{\alpha} R_0(u_i)^{\alpha_{\!0}} \,
 R_{1}(u_i)^{\alpha_{\!1}} \ldots R_{m-1}(u_i)^{\alpha_{m-1}},\]
 with $a_{\alpha} \in \C,$ where $\alpha = (\alpha_{\!0},\ldots,\alpha_{m-1}),$ $\alpha_j = (\alpha_{j\!1},\ldots,\alpha_{js}) \in \N^s$ and $R_j(u_i)^{\alpha_{\!j}} = R_{j}(u_1)^{\alpha_{j\!1}} \ldots R_{j}(u_s)^{\alpha_{js}},$ for $j  =0, \ldots, m-1.$ Now, for every $k \in \N$ and $x \in V,$ we have \[R_j(u_i)(\delta^k x) = \sigma^j(\delta^k)R_j(u_i)(x) = \sigma(\delta)^{jk}R_j(u_i)(x),\] from which we obtain
\begin{equation}
\label{eq:FDELTA}
 f(\delta^k x)~=~\sum \sigma(\delta)^{k N} a_{\alpha} \,R_0(u_i)^{\alpha_{\!0}} \,
 R_{1}(u_i)^{\alpha_{\!1}} \ldots R_{m-1}(u_i)^{\alpha_{m-1}}(x),
\end{equation} where $N = \displaystyle \sum_{l=1}^{m-1} l (\alpha_{l1} + \ldots + \alpha_{ls}) \in \N.$ We now use (\ref{eq:FDELTA}) in (\ref{eq:Rj}) to get
\[
 \sigma(\delta)^{N - j} + \sigma(\delta)^{2(N - j)} + \ldots + \sigma(\delta)^{(m-1)(N - j)} = m-1.\] It follows from
 (\ref{eq:ROOTUNITY}) that $\sigma(\delta)^{N - j} = 1,$ that is, $N - j \equiv$ 0 mod $m.$ Therefore, the generators of $\Psigmaj(\Gamma),$ as a module over $\mathcal{P}(\Gamma),$ are given by the products
 \[R_1(u_1)^{\alpha_{11}} \ldots R_1(u_s)^{\alpha_{1s}} \ldots R_{m-1}(u_1)^{\alpha_{m-1,1}} \ldots R_{m-1}(u_s)^{\alpha_{m-1,s}},
\] for $ \displaystyle \sum_{l=1}^{m-1} l (\alpha_{l1} + \ldots + \alpha_{ls}) \equiv j$ mod $m.$ \hfill \eproof
\end{proof}

\quad

The next result is direct from  (\ref{eq:DECOMPOSITION3}). It gives  generators of $\mathcal{P}(K)$ as a module over  $\mathcal{P}(\Gamma)$, which is a  crucial tool  for Theorem~\ref{thm:MAIN2}.

\begin{corollary} \label{cor:RINGENERATORS}
For each $j = 1, \ldots, m-1$, let $\mathcal{B}_j$ be a generating set of the module $\Psigmaj(\Gamma)$ over $\mathcal{P}(\Gamma)$ given by Theorem~\ref{thm:RELATIVEGENERATORS}.  Set $\mathcal{B}_0 = \{1\}.$ Then $\mathcal{B} =  \bigcup_{j = 0}^{m-1}\mathcal{B}_j$ is a generating set of
the module $\mathcal{P}(K)$ over
$\mathcal{P}(\Gamma)$.
\end{corollary}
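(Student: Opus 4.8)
The plan is to read the statement off directly from the direct sum decomposition~(\ref{eq:DECOMPOSITION3}), using the fact that the $j=0$ summand is exactly the ground ring $\mathcal{P}(\Gamma)$. First I would fix an arbitrary $f \in \mathcal{P}(K)$ and apply~(\ref{eq:DECOMPOSITION3}) to write it uniquely as $f = \sum_{j=0}^{m-1} f_j$, with $f_j \in \Psigmaj(\Gamma)$ for each $j$. This decomposition carries the whole weight of the argument: it resolves an arbitrary $K$-invariant into its relative-invariant components, each lying in a module whose $\mathcal{P}(\Gamma)$-generators are already in hand.

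I would then treat the two ranges of $j$ separately. For $j \in \Jo$, the set $\mathcal{B}_j$ supplied by Theorem~\ref{thm:RELATIVEGENERATORS} generates $\Psigmaj(\Gamma)$ over $\mathcal{P}(\Gamma)$, so each such $f_j$ is a $\mathcal{P}(\Gamma)$-linear combination of elements of $\mathcal{B}_j$. For $j=0$ the component is handled trivially: directly from the definition of $\sigma^j$-relative invariance, a $\sigma^0$-relative invariant satisfies $f(\gamma x) = f(x)$ for all $\gamma \in \Gamma$ and so is precisely a $\Gamma$-invariant; hence $\mathcal{P}_{\sigma^0}(\Gamma) = \mathcal{P}(\Gamma)$, and $f_0 = f_0 \cdot 1$ is a $\mathcal{P}(\Gamma)$-multiple of the single generator $1 \in \mathcal{B}_0$.

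Summing these expressions exhibits $f$ as a $\mathcal{P}(\Gamma)$-linear combination of the elements of $\mathcal{B} = \bigcup_{j=0}^{m-1}\mathcal{B}_j$, which is exactly the assertion that $\mathcal{B}$ generates $\mathcal{P}(K)$ as a module over $\mathcal{P}(\Gamma)$. I do not expect any genuine obstacle, since the result is an immediate corollary of the decomposition; the only point meriting a line of comment is the identification $\mathcal{P}_{\sigma^0}(\Gamma) = \mathcal{P}(\Gamma)$, which is what licenses the choice $\mathcal{B}_0 = \{1\}$ in place of invoking Theorem~\ref{thm:RELATIVEGENERATORS} (whose statement is phrased only for $j \geq 1$).
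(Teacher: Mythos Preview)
Your argument is correct and matches the paper's own justification, which simply states that the result is direct from the decomposition~(\ref{eq:DECOMPOSITION3}). You have spelled out exactly the details the paper leaves implicit, including the identification $\mathcal{P}_{\sigma^0}(\Gamma) = \mathcal{P}(\Gamma)$ that justifies $\mathcal{B}_0 = \{1\}$.
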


\subsection{Computation of equivariants and relative equivariants}

By the $K$-invariant theory, we obtain generating sets for each $\vecP(\Gamma)$ and
$\vecPsigmaj(\Gamma)$, $j=1, \ldots, m-1$, as  $\mathcal{P}(\Gamma)$-modules.

\begin{lemma} \label{lem:MAIN2}
Let  $\mathcal{B} = \{v_0 \equiv 1, v_1, \ldots, v_p\}$ be the
generating set of the module $\mathcal{P}(K)$ over the
ring $\mathcal{P}(\Gamma)$ given as
in Corollary \ref{cor:RINGENERATORS}. Let $\{H_0,\ldots,H_q\}$ be a
generating set of the module $\vecP(K)$ over the ring
$\mathcal{P}(K)$. Then
\[
 \{H_{ik}~=~v_i H_k:i=0,\ldots,p \,;\; k=0,\ldots,q\}
\]
is a generating set of  $\vecP(K)$  as a $\mathcal{P}(\Gamma)$-module.
\end{lemma}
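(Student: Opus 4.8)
The plan is to exploit the transitivity of generating sets through the tower of module structures at hand: $\mathcal{P}(K)$ viewed as a module over $\mathcal{P}(\Gamma)$, and $\vecP(K)$ viewed as a module over $\mathcal{P}(K)$. Before running the substitution argument, I would first check the only genuine well-definedness point, namely that each candidate generator actually lands in the target module. Since $v_i \in \mathcal{P}(K)$ is $K$-invariant and $H_k \in \vecP(K)$ is $K$-equivariant, for every $\kappa \in K$ we have $(v_i H_k)(\kappa x) = v_i(\kappa x)\,H_k(\kappa x) = v_i(x)\,\kappa H_k(x) = \kappa\,(v_i H_k)(x)$, so indeed $H_{ik} \in \vecP(K)$. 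Scaling a $K$-equivariant by a $K$-invariant preserves $K$-equivariance, and this is immediate.

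The core of the argument is then a two-step expansion. Given an arbitrary $g \in \vecP(K)$, I would first use that $\{H_0, \ldots, H_q\}$ generates $\vecP(K)$ over $\mathcal{P}(K)$ to write $g = \sum_{k=0}^{q} f_k H_k$ with each $f_k \in \mathcal{P}(K)$. Next, I would invoke Corollary~\ref{cor:RINGENERATORS}, which guarantees that $\mathcal{B} = \{v_0, \ldots, v_p\}$ generates $\mathcal{P}(K)$ over $\mathcal{P}(\Gamma)$, to expand each coefficient as $f_k = \sum_{i=0}^{p} a_{ik} v_i$ with $a_{ik} \in \mathcal{P}(\Gamma)$.

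Substituting the second expansion into the first and regrouping yields
\[ g = \sum_{k=0}^{q} \Bigl( \sum_{i=0}^{p} a_{ik} v_i \Bigr) H_k = \sum_{i=0}^{p} \sum_{k=0}^{q} a_{ik}\, (v_i H_k) = \sum_{i,k} a_{ik}\, H_{ik}, \]
which exhibits $g$ as a $\mathcal{P}(\Gamma)$-linear combination of the $H_{ik}$. Since $g$ was arbitrary, the set $\{H_{ik}\}$ generates $\vecP(K)$ over $\mathcal{P}(\Gamma)$, as claimed. I do not anticipate any real obstacle here: the statement is a standard composition-of-generating-sets result for a module tower, and the only care required is bookkeeping—keeping track of which ring each coefficient belongs to at each stage, and confirming the closure property verified in the first step.
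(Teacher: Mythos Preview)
Your proof is correct and follows essentially the same two-step expansion as the paper: write $g$ as a $\mathcal{P}(K)$-combination of the $H_k$, then expand each coefficient as a $\mathcal{P}(\Gamma)$-combination of the $v_i$, and substitute. The only addition is your explicit verification that each $H_{ik}$ lies in $\vecP(K)$, which the paper leaves implicit.
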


\begin{proof}
Let $G \in\vecP(K)$. Then
\begin{equation} \label{forma1}
 G~=~\sum_{k=0}^{q} p_k \, H_k,
\end{equation}
with $p_k \in\mathcal{P}(K)$. Since
$\mathcal{B}$ generates  $\mathcal{P}(K)$ as a $\mathcal{P}(\Gamma)$-module, it follows that
\begin{equation} \label{forma2}
 p_k=~\sum_{i=0}^{p} p_{ik} \, v_i,
\end{equation}
with $p_{ik}\in\mathcal{P}(\Gamma)$. From \eqref{forma1} and
\eqref{forma2} we get
\[
 G~=~\sum_{k=0}^{q} \left( \sum_{i=0}^{p}
     p_{ik} \, v_i \right)
     H_k~=~\sum_{i,k=0}^{p,q}
     p_{ik} \, \big(v_i H_k
     \big).
\]
\hfill \eproof
\end{proof}

\begin{lemma}
Let $\{H_{00},\dots,H_{pq}\}$ be
a generating set of
$\vecP(K)$ over the ring $\mathcal{P}(\Gamma)$ given
as in Lemma \ref{lem:MAIN2}. Then, for each $j \in \J,$ the set
\[
 \{\vec{R}_j(H_{ik}):
 i=0,\ldots,p \,;\; k=0,\ldots,q\}
\]
generates $\vecPsigmaj(\Gamma)$ as a  $\mathcal{P}(\Gamma)$-module.

\end{lemma}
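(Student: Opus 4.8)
The plan is to leverage the two properties of the Reynolds operators recorded in Proposition~\ref{prop:PROPERTIES1}: that each $\vec{R}_j$ is a homomorphism of $\mathcal{P}(\Gamma)$-modules, and that it is an idempotent projection of $\vecP(K)$ onto $\vecPsigmaj(\Gamma)$. The first lets me slide $\vec{R}_j$ past any $\mathcal{P}(\Gamma)$-coefficients; the second guarantees simultaneously that the proposed generators land in the target module and that $\vec{R}_j$ fixes every element already in $\vecPsigmaj(\Gamma)$. These are exactly the two facts needed, so the argument will be short.

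First I would fix $j \in \J$ and note that, since $\IM(\vec{R}_j) = \vecPsigmaj(\Gamma)$ by part (ii) of the proposition, each $\vec{R}_j(H_{ik})$ lies in $\vecPsigmaj(\Gamma)$; hence the proposed set is contained in the module we wish to generate. Next I would take an arbitrary $g \in \vecPsigmaj(\Gamma)$. Because $\vecPsigmaj(\Gamma) \subseteq \vecP(K)$ and $\{H_{ik}\}$ generates $\vecP(K)$ over $\mathcal{P}(\Gamma)$ by Lemma~\ref{lem:MAIN2}, I may write $g = \sum_{i,k} p_{ik} H_{ik}$ with coefficients $p_{ik} \in \mathcal{P}(\Gamma)$.

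The decisive step is to apply $\vec{R}_j$ to this expression. On one hand, $g \in \vecPsigmaj(\Gamma)$ and $\vec{R}_j$ is the idempotent projection onto this module, so $\vec{R}_j(g) = g$. On the other hand, the $\mathcal{P}(\Gamma)$-linearity of $\vec{R}_j$ gives
\[
 g~=~\vec{R}_j(g)~=~\vec{R}_j\Bigl(\sum_{i,k} p_{ik}\,H_{ik}\Bigr)~=~\sum_{i,k} p_{ik}\,\vec{R}_j(H_{ik}),
\]
exhibiting $g$ as a $\mathcal{P}(\Gamma)$-combination of the $\vec{R}_j(H_{ik})$. This establishes the claim.

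I do not anticipate a genuine obstacle here; the single point requiring care is that the coefficients $p_{ik}$ must be $\Gamma$-invariant (that is, lie in $\mathcal{P}(\Gamma)$) for them to pass unchanged through the module homomorphism $\vec{R}_j$. This is precisely what Lemma~\ref{lem:MAIN2} provides, and it explains why the preceding lemma was needed to convert the $K$-equivariant generators into $\mathcal{P}(\Gamma)$-generators before projecting: had the $p_{ik}$ merely been in $\mathcal{P}(K)$, the operator would not have commuted past them.
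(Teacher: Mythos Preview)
Your proof is correct and follows essentially the same approach as the paper's own argument: both expand an element of $\vecPsigmaj(\Gamma)$ as a $\mathcal{P}(\Gamma)$-combination of the $H_{ik}$ and then apply the $\mathcal{P}(\Gamma)$-linear projection $\vec{R}_j$, invoking Proposition~\ref{prop:PROPERTIES1}(i)--(ii). The only cosmetic difference is that the paper phrases the first step via surjectivity of $\vec{R}_j$ onto $\vecPsigmaj(\Gamma)$ (choosing a preimage $G$ of $\tilde G$), whereas you use idempotence ($\vec{R}_j(g)=g$); since $g$ is its own preimage these are the same move.
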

\begin{proof}
Let $\tilde{G} \in \vecPsigmaj(\Gamma)$. From
Proposition~\ref{prop:PROPERTIES1}, there exists $G
\in \vecP(K)$ such that
$\tilde{G}~=\vec{R}_j(G)$. As $\{H_{00},\ldots,H_{pq}\}$ is a generating set of the
$\mathcal{P}(\Gamma)$-module $\vecP(K)$, we write
\[
 G~=~\sum_{i,k=0}^{p,q}
     p_{ik} \, H_{ik}
\]
with $p_{ik}\in\mathcal{P}(\Gamma)$. Therefore
\[
 \tilde{G}~=~\vec{R}_j\bigl(\sum_{i,k=0}^{p,q}
     p_{ik} \, H_{ik}\bigr)~=~\sum_{i,k=0}^{p,q}
     p_{ik} \,
 \vec{R}_j(H_{ik}).
\]
\hfill \eproof
\end{proof}

It is now  immediate from the two lemmas above the following
result:

\begin{theorem} \label{thm:MAIN2}
Let $\mathcal{B} = \{v_0 \equiv 1, v_1, \ldots, v_p\}$ be the
generating set of the module $\mathcal{P}(K)$ over  $\mathcal{P}(\Gamma)$ given by
Corollary \ref{cor:RINGENERATORS}. Let $\{H_0,\ldots,H_q\}$  a
generating set of the module $\vecP(K)$ over the ring
$\mathcal{P}(K)$. Then
\[
 \{\vec{R}_j(v_iH_k):
   i=0,\ldots,p \,;\; k=0,\ldots,q\}
\]
is a set of generators for   $\vecPsigmaj(\Gamma)$ over
$\mathcal{P}(\Gamma)$.
\end{theorem}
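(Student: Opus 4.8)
The plan is to chain together the two preceding lemmas, which together already contain all the necessary work. The statement of Theorem~\ref{thm:MAIN2} is precisely the composition of Lemma~\ref{lem:MAIN2} and the unnamed lemma that follows it, so the proof is purely a matter of invoking them in sequence. First I would observe that, by Corollary~\ref{cor:RINGENERATORS}, the set $\mathcal{B} = \{v_0 \equiv 1, v_1, \ldots, v_p\}$ generates $\mathcal{P}(K)$ as a $\mathcal{P}(\Gamma)$-module. Then, given the generating set $\{H_0, \ldots, H_q\}$ of $\vecP(K)$ over the ring $\mathcal{P}(K)$, Lemma~\ref{lem:MAIN2} guarantees that the products $\{v_i H_k : i = 0, \ldots, p;\ k = 0, \ldots, q\}$ generate $\vecP(K)$ as a $\mathcal{P}(\Gamma)$-module.

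Next I would apply the second lemma to this $\mathcal{P}(\Gamma)$-generating set of $\vecP(K)$. That lemma asserts exactly that, for each $j \in \J$, the images $\{\vec{R}_j(H_{ik}) : i = 0, \ldots, p;\ k = 0, \ldots, q\}$ generate $\vecPsigmaj(\Gamma)$ over $\mathcal{P}(\Gamma)$, where $H_{ik} = v_i H_k$. Since $\vec{R}_j(H_{ik}) = \vec{R}_j(v_i H_k)$, this is verbatim the conclusion of the theorem. The essential mechanism underlying the second lemma is that every element $\tilde{G} \in \vecPsigmaj(\Gamma)$ equals $\vec{R}_j(G)$ for some $G \in \vecP(K)$ (by the idempotency in Proposition~\ref{prop:PROPERTIES1}(ii)), and that $\vec{R}_j$ is a homomorphism of $\mathcal{P}(\Gamma)$-modules (Proposition~\ref{prop:PROPERTIES1}(i)), so that the $\mathcal{P}(\Gamma)$-coefficients expressing $G$ in terms of the $H_{ik}$ pass through $\vec{R}_j$ unchanged.

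There is no genuine obstacle here: the theorem is stated as being ``immediate from the two lemmas above,'' and indeed all the substantive content lives in those lemmas and in Proposition~\ref{prop:PROPERTIES1}. If I were writing this from scratch rather than relying on the preceding lemmas, the only point requiring care would be verifying that the $\mathcal{P}(\Gamma)$-linearity of $\vec{R}_j$ is exactly what allows coefficients in $\mathcal{P}(\Gamma)$ to be extracted past the Reynolds operator; this is the step that forces the intermediate passage through a $\mathcal{P}(\Gamma)$-generating set of $\vec{P}(K)$ (Lemma~\ref{lem:MAIN2}) rather than a mere $\mathcal{P}(K)$-generating set, since $\vec{R}_j$ is not $\mathcal{P}(K)$-linear. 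Thus the structure of the argument is: upgrade the generators of $\vecP(K)$ from coefficients in $\mathcal{P}(K)$ to coefficients in $\mathcal{P}(\Gamma)$, then project by $\vec{R}_j$ onto the $\sigma^j$-isotypic component.
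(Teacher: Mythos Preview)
Your proposal is correct and matches the paper's approach exactly: the paper states that Theorem~\ref{thm:MAIN2} is ``immediate from the two lemmas above,'' and your argument is precisely the composition of Lemma~\ref{lem:MAIN2} with the subsequent lemma, together with the observation that $\vec{R}_j$ is $\mathcal{P}(\Gamma)$-linear. Your additional remark about why one must first pass to a $\mathcal{P}(\Gamma)$-generating set (since $\vec{R}_j$ is not $\mathcal{P}(K)$-linear) is a helpful clarification that the paper leaves implicit.
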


We finally remark that Theorem~\ref{thm:MAIN2} is a very useful result even in an equivariant context. In fact,  it provides an algorithmic way to compute generators of $\Gamma$-equivariants ($j=0$) from the knowledge of equivariants only by a subgroup, as long as $\Gamma$ admits a normal subgroup
of finite index. This corresponds to the equivariant version of \cite[Theorem 3.5]{BM4}, which provides a Hilbert basis for the $\Gamma$-invariants from the knowledge of invariants by a smaller subgroup.

\subsection{Example} \label{example:generators}

We illustrate an application of Theorems \ref{thm:RELATIVEGENERATORS}, \ref{thm:MAIN2} for the example of Subsection~\ref{subsec:MOLIEN EXAMPLE} and present Tables 1 and 2 to compare the results with the Hilbert-Poincar\'e series computed in that subsection. The polynomials
\[u_1(z_1,z_2)=z_1\bar{z}_1, \quad u_2(z_1,z_2)=z_1^3, \quad u_3(z_1,z_2)=\bar{z}_1^3,\] \[u_4(z_1,z_2)=z_2, \quad u_5(z_1,z_2)=\bar{z}_2\]
form a Hilbert basis of $\mathcal{P}(K).$ In this case, we have
$R_0(u_i) = u_i,$ for $\ 1 \leq i \leq 3,$ $R_1(u_4) = u_4$ and $R_2(u_5) = u_5.$ All other projections are zero. From \cite[Example 3.6]{BM4}, a Hilbert basis of the ring $\mathcal{P}(\Gamma)$ is  $\{z_1\bar{z}_1, \ z_1^3, \ \bar{z}_1^3, \ z_2\bar{z}_2, \ z_2^3, \ \bar{z}_2^3\}.$ By Theorem \ref{thm:RELATIVEGENERATORS}, the module $\mathcal{P}_{\sigma}(\Gamma)$ is generated by
 \[R_1(u_i),\ R_2(u_i)R_2(u_j), \ \  1 \leq i, j \leq 5, \]
 and  $\mathcal{P}_{\sigma^2}(\Gamma)$  by
 \[R_2(u_i),\ R_1(u_i)R_1(u_j), \ \  1 \leq i, j \leq 5, \]
 both over the ring $\mathcal{P}(\Gamma).$ It follows that  the sets $\{z_2, \ \bar{z}_2^2\}$ and $\{\bar{z}_2, \ z_2^2\}$ generate $\mathcal{P}_{\sigma}(\Gamma)$ and $\mathcal{P}_{\sigma^2}(\Gamma),$ respectively.

From these, we obtain the data given in Table~1, in agreement with
(\ref{eq:SERIES1}), (\ref{eq:SERIES2}) and (\ref{eq:SERIES3}).

\begin{table}[h] \label{table:DIMENSIONS INVARIANTS}
\begin{center}
 \begin{tabular}{|c|c|c|c|} \hline
 & $\dim {\cal P}^d(\Gamma)$ & $\dim {{\cal P}^d}_{\sigma}(\Gamma)$ &
 $\dim {{\cal P}}^d_{\sigma^2}(\Gamma)$ \\ \hline \hline
$d=0$ & 1  & 0& 0 \\ \hline
$d=1$ & 0 &1 & 1\\ \hline
$d=2$ & 2 & 1 & 1\\ \hline
$d=3$ & 4 & 2 & 2\\ \hline
$d=4$ & 3 &5 & 5\\ \hline
$d=5$ & 8 &6 & 6\\ \hline
$d=6$ & 12 & 9& 9 \\ \hline
  \end{tabular}
\end{center}
\caption{Dimensions of the vector spaces of homogeneous $\sigma^j$-relative invariants of degree $d$ for $d=0, \ldots, 6.$}
\end{table}

 By Corollary \ref{cor:RINGENERATORS}, $\mathcal{B} = \{v_0, \ldots, v_4\}$  is a generating set of $\mathcal{P}(K)$ as a module over $\mathcal{P}(\Gamma),$
where \[v_0(z_1,z_2)= 1, \quad v_1(z_1,z_2)=z_2, \quad v_2(z_1,z_2)=\bar{z}_2,\] \[v_3(z_1,z_2)=z_2^2, \quad v_4(z_1,z_2)=\bar{z}_2^2.\]
 Straightforward calculations give
\[H_0(z_1,z_2)= (z_1,0) \quad H_1(z_1,z_2)=(\bar{z}_1^2,0), \quad H_2(z_1,z_2)=(0,1)\]
as generators of $\vecP(K)$ over $\mathcal{P}(K).$ Lemma \ref{lem:MAIN2} provides
\begin{align*}
 H_{00}(z_1,z_2)~ & =~H_0(z_1,z_2)~=~(z_1,0) \,, \\[1mm]
 H_{01}(z_1,z_2)~ & =~H_1(z_1,z_2)~=~(\bar{z}_1^2,0) \,, \\[1mm]
 H_{02}(z_1,z_2)~ & =~H_2(z_1,z_2)~=~(0,1) \,, \\[1mm]
 H_{10}(z_1,z_2)~ & = v_1(z_1,z_2)H_0(z_1,z_2) ~=~(z_1z_2,0)\,, \\[1mm]
 H_{11}(z_1,z_2)~ & =~v_1(z_1,z_2)H_1(z_1,z_2) ~=~(\bar{z}_1^2z_2,0) \,, \\[1mm]
 H_{12}(z_1,z_2)~ & =~v_1(z_1,z_2)H_2(z_1,z_2) ~=~(0,z_2) \,, \\[1mm]
 H_{20}(z_1,z_2)~ & =~v_2(z_1,z_2)H_0(z_1,z_2) ~=~(z_1\bar{z}_2,0) \,, \\[1mm]
 H_{21}(z_1,z_2)~ & =~v_2(z_1,z_2)H_1(z_1,z_2) ~=~(\bar{z}_1^2\bar{z}_2,0)\,, \\[1mm]
 H_{22}(z_1,z_2)~ & =~v_2(z_1,z_2)H_2(z_1,z_2) ~=~(0,\bar{z}_2)   \,, \\[1mm]
 H_{30}(z_1,z_2)~ & =~v_3(z_1,z_2)H_0(z_1,z_2) ~=~(z_1z_2^2,0) \,, \\[1mm]
 H_{31}(z_1,z_2)~ & =~v_3(z_1,z_2)H_1(z_1,z_2) ~=~(\bar{z}_1^2z_2^2,0)\,, \\[1mm]
 H_{32}(z_1,z_2)~ & =~v_3(z_1,z_2)H_2(z_1,z_2) ~=~(0,z_2^2)   \,, \\[1mm]
 H_{40}(z_1,z_2)~ & =~v_4(z_1,z_2)H_0(z_1,z_2) ~=~(z_1\bar{z}_2^2,0) \,, \\[1mm]
 H_{41}(z_1,z_2)~ & =~v_4(z_1,z_2)H_1(z_1,z_2) ~=~(\bar{z}_1^2\bar{z}_2^2,0)\,, \\[1mm]
 H_{42}(z_1,z_2)~ & =~v_4(z_1,z_2)H_2(z_1,z_2) ~=~(0,\bar{z}_2^2)
\end{align*}
as generators of $\vecP(K)$ over $\mathcal{P}(\Gamma).$ From Theorem~\ref{thm:MAIN2}  we compute $\vec{R}_0(H_{i2}) = H_{i2},$ $\vec{R}_0(H_{0j}) = H_{0j},$ $\vec{R}_1(H_{ij}) = H_{ij},$ $\vec{R}_1(H_{l2}) = H_{l2},$ $\vec{R}_2(H_{lj}) = H_{lj},$ $\vec{R}_2(H_{02}) = H_{02},$ for $i= 1,4,$ $j=0,1$ and $l=2,3.$ All the other projections are zero. Hence, the sets
\[\bigl \{(z_1,0), \ (\bar{z}_1^2,0), \ (0,z_2), \ (0,\bar{z}_2^2)\bigr \},\]
\[\bigl \{(z_1z_2,0), \ (\bar{z}_1^2z_2,0), \ (0,\bar{z}_2), \ (0,z_2^2), \ (z_1\bar{z}_2^2,0), \ (\bar{z}_1^2\bar{z}_2^2,0)\bigr \}\] and
\[\bigl \{(0,1), \ (z_1\bar{z}_2,0), \ (\bar{z}_1^2\bar{z}_2,0), \ (z_1z_2^2,0), \ (\bar{z}_1^2z_2^2,0)\bigr \}\] generate $\vecP(\Gamma),$ $\vec{P}_{\sigma}(\Gamma)$ and $\vec{P}_{\sigma^2}(\Gamma)$ over $\mathcal{P}(\Gamma),$ respectively.

Therefore, we obtain the general form of a $\Gamma$-equivariant polynomial mapping
$$g(z_1,z_2) = \bigl(f_1(z) z_1 + f_2(z) \bar{z}_1^2, f_3(z) z_2 + f_4(z) \bar{z}_2^2 \bigr),$$
of a $\sigma$-relative equivariant
$$h(z_1,z_2) = \bigl(f_5(z) z_1z_2 + f_6(z) \bar{z}_1^2z_2 + f_7(z) z_1 \bar{z}_2^2 + f_8(z) \bar{z}_1^2 \bar{z}_2^2, f_9(z) \bar{z}_2 + f_{10}(z)z_2^2 \bigr)$$
and of a $\sigma^2$-relative equivariant  $$p(z_1,z_2) = \bigl(f_{11}(z)z_1 \bar{z}_2 + f_{12}(z) \bar{z}_1^2 \bar{z}_2 + f_{13}(z) z_1 z_2^2 + f_{14}(z) \bar{z}_1^2z_2^2, f_{15}(z)\bigr),$$ where $z = (z_1,z_2) \in {\bf C}^2$ and $f_i \in \mathcal{P}(\Gamma)$ for all $i = 1, \ldots, 15.$

Finally we also obtain, from the list of generators above, the data given in Table~2, which is in agreement with (\ref{eq:SERIES4}), (\ref{eq:SERIES5}) and (\ref{eq:SERIES6}).

\begin{table}[h] \label{table:DIMENSIONS EQUIVARIANTS}
\begin{center}
 \begin{tabular}{|c|c|c|c|} \hline
 & $\dim {\vec{\cal P}}^d(\Gamma)$ & $\dim {{\vec{\cal P}}^d}_{\sigma}(\Gamma)$ &
 $\dim {{\vec{\cal P}}}^d_{\sigma^2}(\Gamma)$ \\ \hline \hline
$d=0$ & 0 & 0& 1 \\ \hline
$d=1$ & 2 &1 & 0\\ \hline
$d=2$ & 2 & 2 & 3 \\ \hline
$d=3$ & 4 & 4& 6\\ \hline
$d=4$ & 10 &8 & 6\\ \hline
$d=5$ & 12 &12& 14 \\ \hline
$d=6$ & 18 & 18 &  21\\ \hline
  \end{tabular}
\end{center}
\caption{Dimensions of the vector spaces of homogeneous $\sigma^j$-relative equivariants of degree $d$,  for $d=0, \ldots, 6.$}
\end{table}

\vspace*{1cm}

\noindent {\it Acknowledgments.} The research of M.M. was supported by FAPESP,  BPE grant 2013/11108-7.



\begin{thebibliography}{99}

\bibitem{antoneli} F.~Antoneli, P.H.~Baptistelli, A.P.S.~Dias, M.~Manoel,
Invariant theory and reversible-equivariant vector fields, {\it Journal of Pure and Applied Algebra} {\bf 213} (2009) 649--663.

\bibitem{antoneli1} F.~Antoneli, A.P.S.~Dias, P.C.~Matthews,
Invariants, Equivariants and Characters in Symme\-tric Bifurcation
Theory. \textit{Proc. Roy. Soc. Edinburgh} {\bf 137A} (2007) 01--36.

\bibitem{BM4} P.H.~Baptistelli, M.~Manoel, Invariants and relative invariants under compact Lie groups, {\it Journal of Pure and Applied Algebra} {\bf 217} (2013) 2213--2220.

\bibitem{BM3} P.H.~Baptistelli, M.~Manoel, The $\sigma$-isotypic decomposition and the $\sigma$-index of reversible-equivariant systems. {\it Topology and its Applications} {\bf 159} (2012) 389--396.

\bibitem{BM1} P.H.~Baptistelli, M.~Manoel, The
classification of reversible-equivariant steady-state bifurcations
on self-dual spaces, {\it Math. Proc. Cambridge Philos. Soc.} {\bf 145} (2)
(2008) 379--401.

\bibitem{Buono} P.~L.~Buono, J.~S.~W.~Lamb, R.~M.~Roberts, Bifurcation and Branching of Equilibria of Reversible Equivariant Vector Fields. {\it Nonlinearity} {\bf 21} (2008), 625--660.

\bibitem{Brocker} T.~Br\"{o}cker, T.~tom Dieck,
\textit{Representations of Compact Lie Groups}.
Graduate Texts in Mathematics {\bf 98},
Springer-Verlag, New York, 1995.

\bibitem{chossat} P.~Chossat, R.~Lauterbach, {\it Methods in equivariant bifurcations and dynamical systems }. Advanced series in nonlinear dynamics {\bf 15}, World Scientific, 2000.

\bibitem{Naschie} M.S. El Naschie,  On the nature of complex time, diffusion and the two-slit experiment. {\it Chaos, Solitons and Fractals} {\bf 5}(6) (1995) 1031--1032.



\bibitem{Gat} K.~Gatermann,
\textit{Computer Algebra Methods for Equivariant Dynamical Systems}.
Lecture Notes in Mathematics {\bf 1728}, Springer-Verlag,
Berlin-Heidelberg, 2000.

\bibitem{Gat96} K.~Gatermann,
Semi-invariants, Equivariants and Algorithms.
\textit{Appl. Algebra Eng. Commun. Comput.} \textbf{7} (1996) 105--124.


\bibitem{GS69} M.~Golubitsky, I.~Stewart, D.~Schaeffer,
\textit{Singularities e Groups in Bifurcation Theory, Vol. II}.
Appl. Math. Sci. {\bf 69}, Springer-Verlag, New York, 1985.

\bibitem{Lamb99} J. S. W. Lamb, R. M. Roberts, Reversible equivariant linear systems. {\it J. Diff. Eq.} {\bf 159} (1999) 239 - 279.

\bibitem{MSSF} A. Mejias, L. Di.G. Sigalotti, E. Sira, F. Felice, On El Naschie's complex time, Hawking's imaginary time and special relativity. {\it Chaos, Solitons and Fractals} {\bf 19} (2004) 773--777.



\bibitem{SGP} I. Stewart, M. Golubitsky, M. Pivato,  Symmetry groupoids and patterns of synchrony in coupled cell networks. {\it SIAM J. Appl. Dyn. Syst.} {\bf 2}(4) (2003) 609--646.


\bibitem{sturm} B.~Sturmfels, \textit{Algorithms in Invariant Theory}.
Springer-Verlag, New York, 1993.

\end{thebibliography}
\end{document}